\documentclass[11pt]{amsart}

\usepackage{amsmath,amssymb,graphicx,bbm}
\usepackage{amsthm,verbatim}
\usepackage{mathrsfs,mathtools}
\usepackage[linesnumbered,ruled,lined,resetcount]{algorithm2e}

\usepackage[footnotesize,bf]{caption}
\usepackage[left=1.25in,right=1.25in,top=1in]{geometry}

\usepackage{array,multirow,booktabs} % for fancier tables

\usepackage{mathdefs}
\usepackage[dvipsnames]{xcolor}

%\usepackage{chngcntr}
%\counterwithin{equation}{section}

\newcommand{\DP}{\texttt{DP}}
\newcommand{\HD}{\texttt{HD}}
\newcommand{\MC}{\texttt{MC}}
\newcommand{\aPC}{\texttt{aPC}}
\newcommand{\SP}{\texttt{SP}}
\newcommand{\LZ}{\texttt{LZ}}
\newcommand{\PC}{\texttt{PC}}
\newcommand{\PCL}{\texttt{PCL}}
\newcommand{\bs}[1]{\boldsymbol{#1}}

\usepackage{url}

\author{Zexin Liu}\thanks{This work was supported by the National Institute of Biomedical Imaging and Bioengineering of the National Institutes of Health under grant number U24EB029012, and under National Science Foundation award DMS-1720416.}
\author{Akil Narayan}
\address{Department of Mathematics, and Scientific Computing and Imaging (SCI) Institute, the University of Utah}
\email{zexin@math.utah.edu, akil@sci.utah.edu}

\title{On the computation of recurrence coefficients for univariate orthogonal polynomials}

\begin{document}
\begin{abstract}
Associated to a finite measure on the real line with finite moments are recurrence coefficients in a three-term formula for orthogonal polynomials with respect to this measure. These recurrence coefficients are frequently inputs to modern computational tools that facilitate evaluation and manipulation of polynomials with respect to the measure, and such tasks are foundational in numerical approximation and quadrature. Although the recurrence coefficients for classical measures are known explicitly, those for nonclassical measures must typically be numerically computed. We survey and review existing approaches for computing these recurrence coefficients for univariate orthogonal polynomial families and propose a novel ``predictor-corrector" algorithm for a general class of continuous measures. We combine the predictor-corrector scheme with a stabilized Lanczos procedure for a new hybrid algorithm that computes recurrence coefficients for a fairly wide class of measures that can have both continuous and discrete parts. We evaluate the new algorithms against existing methods in terms of accuracy and efficiency.

\bigskip
\noindent \textbf{Keywords.} Orthogonal polynomials; Recurrence coefficients; General measures; Adaptive quadrature; Lanczos
\end{abstract}

\maketitle

%%%%%%
\section{Introduction}\label{sec:introduction}

Univariate orthogonal polynomials are a mainstay tool in numerical analysis and scientific computing. These polynomials serve as theoretical foundations for numerical algorithms involving approximation and quadrature \cite{szego_orthogonal_1975,freud_orthogonal_1971,nevai_orthogonal_1980,gautschi_orthogonal_2004,gautschi_orthogonal_2006}. Given a positive measure $\mu$ on the real line $\R$, if $\mu$ has finite polynomial moments of all orders along with an infinite number of points of increase, then a family of orthonormal polynomials $\{p_n\}_{n=0}^\infty$ exists, satisfying $\deg p_n = n$, and
\begin{align*}
  \int_\R p_n(x) p_m(x) \dx{\mu}(x) = \delta_{m,n},
\end{align*}
where $\delta_{m,n}$ is the Kronecker delta. If we further assume that each $p_n$ has a positive leading coefficient, then these polynomials are unique. Such families are known to obey a three-term recurrence formula,
\begin{align}\label{eq:ttr}
  x p_n(x) &= b_n p_{n-1}(x) + a_{n+1} p_n(x) + b_{n+1} p_{n+1}(x), & n \geq 0,
\end{align}
with the starting conditions $p_{-1} \equiv 0$ and $p_0(x) = 1/b_0$. The coefficients $(a_n)_{n=1}^\infty \subset \R$ and $(b_n)_{n=0}^\infty \subset (0, \infty)$ depend only on the (polynomial) moments of $\mu$. In practical settings, knowledge of these coefficients is the only requirement for implementing stable, accurate algorithms that achieve evaluation and manipulation of polynomials that are core components of approximation and quadrature algorithms. For example, the $n$ eigenvalues of the $n \times n$ Jacobi matrix $\bs{J}_n$ are precisely the abscissae of a $\mu$-Gaussian quadrature rule, with $\bs{J}_n$ the symmetric tridiagonal matrix given by
\begin{align}\label{eq:jacobi-matrix}
  \bs{J}_n(\mu) = \left(\begin{array}{ccccc} 
    a_1 & b_1 & & & \\
    b_1 & a_2 & b_2 & & \\
    & \ddots & \ddots & \ddots & \\
    & & b_{n-2} & a_{n-1} & b_{n-1} \\
    & & & b_{n-1} & a_n \\
  \end{array}\right).
\end{align}
Therefore, the recurrence coefficients $a_n$ and $b_n$ must be computed stably and accurately. 

Some classical probability measures $\mu$ give rise to classical families of orthogonal polynomials $p_n$: A Gaussian measure results in Hermite polynomials; the uniform measure on a compact interval results in Legendre polynomials; a Beta measure corresponds with Jacobi polynomials; and a one-sided exponential measure gives rise to Laguerre polynomials. These classical polynomial families are among a few for which explicit formulas are available for the recurrence coefficients $a_n$ and $b_n$, see, e.g., \cite[Tables 1.1, 1.2]{gautschi_orthogonal_2004}.

However, for even modestly complicated measures $\mu$ outside this classical collection, the task of determining these coefficients can be quite difficult. For example, an application in which this situation arises is in polynomial Chaos methods, which are techniques in scientific computing problems for modeling the effect of uncertainty in a model \cite{wiener_homogeneous_1938,xiu_wiener--askey_2002}. An output's dependence on a finite number of random variable inputs is modeled with polynomial dependence on those inputs. With one random input, the polynomial approximation is typically constructed using a basis of polynomials orthogonal to the distribution of the random input, which requires building orthogonal polynomials with respect to a given, often nonclassical, probability measure.

A simple example that illustrates how computation of orthogonal polynomials is difficult for even fairly simple measures is furnished by the class of \textit{Freud weights},
\begin{align}\label{eq:freud}
  \dx{\mu}(x) &= \exp\left( - |x|^\alpha \right) \dx{x}, & \alpha &> 0,
\end{align}
with support equal to all of $\R$.  (In what follows, we will refer to $\mu$ as a measure and $\dx{\mu}$ as a weight.) When $\alpha = 2$, corresponding to the Gaussian measure (and Hermite polynomial family), the three-term recurrence coefficients are known exactly. However, when $\alpha = 1$, no closed-form analytical formula for the coefficients $a_n$ and $b_n$ exists, even though the moments of $\mu$ are known explicitly in terms of well-studied special functions. (For example, note that under a change of variable, the moments of the measure above correspond to evaluations of the Euler Gamma function.) 

In such general cases when no known closed-form expression for the three-term recurrence coefficients exists, numerical methods are employed to approximate them. The main goal of this article is to survey and extend existing methods for computing these recurrence coefficients associated to measures for which explicit formulas are not available.

\subsection{Existing approaches}

When $\mu$ is not a measure for which the coefficients have explicitly known formulas, one typically resorts to numerical methods to approximately compute these coefficients. A summary of the methods we consider in this article is presented in Table \ref{tab:notation}, which indicates later sections in this article where we give a formal description of each algorithm. A brief description of these procedures is given in Section \ref{sec:methods}, but an excellent and more detailed historical survey is provided in \cite[Section 2.6]{gautschi_orthogonal_2004}. Below we present a nontechnical summary of the approaches that we survey.

A classical approach to computing recurrence coefficients from moments is via determinants of Hankel matrices \cite[Section 2.1.1]{gautschi_orthogonal_2004}. A second classical approach, the Chebyshev algorithm, transforms monomial moments by expressing the recurrence coefficients in terms of moments involving monomials and $p_n$ \cite{chebyshev1859interpolation}. A more effective approach, the modified Chebyshev algorithm, uses moments involving $p_n$ and another arbitrary set of polynomials \cite{sack_algorithm_1971,wheeler_modified_1974,gautschi_survey_1981}. Yet another procedure, the Stieltjes algorithm \cite{stieltjes1884quelques}, computes recurrence coefficients directly assuming moments involving $p_n$ can be computed. Finally, given a measure with discrete support, the Lanczos algorithm can be used to compute the Jacobi matrix for $\mu$, yielding the recurrence coefficients; although this is typically unstable, a stable variant is given in \cite{rutishauser1963jacobi}. 

For very special forms of weight functions, other procedures can be derived. A primary example of this are iterative recurrence-type algorithms resulting from discrete Painlev\'{e} equations when $\dx{\mu}(x) \propto \exp(-x^{\alpha})$ for $\alpha/2 \in \N$. These Painlev\'{e} equations, which determine the recurrence coefficients for $p_n$, are remarkably simple and direct to implement, but are quite unstable \cite{assche_discrete_2005}. A final approach we consider amounts to using a linear orthogonalization procedure, such as (modified) Gram-Schmidt, to compute the expansion coefficients of $p_n$ in terms of the monomials. However, this procedure is known to produce quite ill-conditioned matrices, especially for large $n$, making the computation of $p_n$, and hence the recurrence coefficients, suffer roundoff errors. Therefore, although this approach has often been used \cite{witteveen2006modeling,witteveen2007modeling}, it is less useful in the context of this article. Nevertheless, we consider one recent related approach, an ``arbitrary polynomial chaos" approach suggested in \cite{oladyshkin_data-driven_2012}, which amounts to solving a linear system involving a modified Hankel matrix.

\begin{table}
  \begin{center}
  \resizebox{1.0\textwidth}{!}{
    \renewcommand{\tabcolsep}{0.4cm}
    \renewcommand{\arraystretch}{1.3}
    {\scriptsize
      \begin{tabular}{cccccc} 
      \toprule
        Method & Abbreviation & Section & Citation \\\midrule
         Discrete Painlev\'{e} I equations method & \DP & \ref{subsec:DP} & \cite{assche_discrete_2005} \\
         Hankel Determinants & \HD & \ref{subsec:HD} & \cite[Section 2.1.1]{gautschi_orthogonal_2004} \\
         Arbitrary polynomial chaos expansion method & \aPC & \ref{subsec:aPC} & \cite[Section 3.1]{oladyshkin_data-driven_2012} \\
         Modified Chebyshev algorithm & \MC & \ref{subsec:MC} & \cite[Section 2.1.7]{gautschi_orthogonal_2004} \\
         Stieltjes procedure & \SP & \ref{subsec:SP} & \cite[Section 2.2.3.1]{gautschi_orthogonal_2004} \\
         Stabilized Lanczos algorithm & \LZ & \ref{subsec:LZ} & \cite[Section 2.2.3.2]{gautschi_orthogonal_2004} \\
         Predictor-corrector method & \PC & \ref{subsec:PC} & --- \\
         Predictor-corrector-Lanczos method & \PCL & \ref{subsec:PCL} & --- \\
      \bottomrule
      \end{tabular}
    }
 }
  \end{center}
  \caption{Abbreviation, subsection, and algorithm for each method. Also included is a modern citation that explains each algorithm}\label{tab:notation}
\end{table}

\subsection{Contributions of this article}
Several algorithms exist to compute the recurrence coefficients, but a few clear and direct recommendations are available for researchers without substantial experience and/or knowledge of the field. The main contribution of this paper is to summarize, evaluate, and extend existing methods for computing recurrence coefficients for univariate orthogonal polynomial families. We first provide a survey and comparison of many existing algorithms (see Section \ref{sec:methods}). In Section \ref{subsec:PC} we propose a novel ``predictor-corrector" algorithm and evaluate its utility. Finally, by modifying the ``multiple component" approach in \cite{gautschi_generating_1982,gautschi_algorithm_1994}, we consider a new hybrid algorithm in Section \ref{subsec:PCL} that combines our predictor-corrector scheme with a stabilized Lanczos procedure. Our algorithm can be used to compute recurrence coefficients for the fairly general class of measures whose differentials are given by
\begin{align}\label{eq:mu}
  \dx{\mu}(x) = \sum_{j=1}^C w_j(x) \mathbbm{1}_{I_j}(x) \dx{x} + \sum_{j=1}^M \nu_j \delta_{\tau_j} \dx{x},
\end{align}
where $C$ and $M$ are finite (either possibly 0), $\delta_{\tau_j}$ is a Dirac mass located at $\tau_j \in \R$, $\{\nu_j\}_{j=1}^M$ are positive scalars, each $I_j$ is a (possibly unbounded) nontrivial interval, and $w_j$ is a continuous (ideally smooth) non-negative function on $I_j$. Specification of the $w_j$, $I_j$, $\tau_j$, and $\nu_j$ is sufficient to utilize most of the algorithms we consider, but having extra information that characterizes $w_j$, particularly prescribed behavior at finite endpoints of $I_j$, will increase the accuracy of the procedures. In other words, with $I_j = [\ell_j, r_j]$ and either of the endpoints $\ell_j, r_j$ is finite, we assume knowledge of exponents $\beta_j, \alpha_j > -1$ such that $w_j$ has polynomial singular strength $\beta_j$, $\alpha_j$ at endpoints $\ell_j, r_j$, i.e., 
\begin{align}\label{eq:singularity-behavior}
  0 < &\lim_{x \downarrow \ell_j} w_j(x) (x - \ell_j)^{-\beta_j} < \infty, & 
  0 < &\lim_{x \uparrow r_j} w_j(x) (r_j - x)^{-\alpha_j} < \infty.
\end{align}
Note that our assumption that $\alpha_j, \beta_j > -1$ is natural since if the inequality above is true with, say, $\alpha_j \leq -1$, then $\mu$ is not a finite measure and therefore is not a probability measure.

Note that the form of $\mu$ we assume in \eqref{eq:mu} is quite general, and includes all classical measures, those with piecewise components, measures with discrete components, measures with unbounded support, and measures whose densities have integrable singularities. 

This paper is structured as follows: In section \ref{sec:methods} we briefly survey the existing approaches summarized in Table \ref{tab:notation}. Section \ref{sec:hybrid} contains the discussion that leads to our proposed hybrid ``\PCL" algorithm: Section \ref{subsec:PC} discusses the predictor-corrector scheme; section \ref{subsec:moments} briefly describes how we compute moments, which leverages the specific form of the measure $\mu$ assumed in \eqref{eq:mu} and \eqref{eq:singularity-behavior}; section \ref{subsec:PCL} combines these with a stabilized Lanczos procedure. Finally, we present a wide range of numerical examples in Section \ref{sec:numerical}, which compares many of the techniques in Table \ref{tab:notation}, and demonstrates the accuracy and efficiency of the ``\PCL" algorithm.

%%%%%%
\section{Existing approaches}\label{sec:methods}

We review here some existing methods for computing recurrence coefficients. In order to compute the required coefficients, having some knowledge about the measure $\mu$ is neccessary. The following are two of the more common assumptions that one makes, with the latter assumption being stronger:
\begin{itemize}
  \item The (monomial) moments of all orders of $\mu$ are known, i.e., the moment sequence
    \begin{align}\label{eq:moments}
      m_n &\coloneqq \int x^n \dx{\mu}(x), & n &\geq 0,
    \end{align}
    is known and available. In practice, the integrals can be obtained by the composite quadrature approach introduced in Section \ref{subsec:moments}, but sometimes they can also be computed directly in terms of special functions, such as Gamma function given the Freud weights.
  \item General polynomial moments, i.e., 
    \begin{align}\label{eq:polymoments}
      \int q(x) \dx{\mu}(x),
    \end{align}
    are computable for a general, finite-degree polynomial $q$ that is often identified only partway through an algorithm.
\end{itemize}

No particular prescription exists for how the moments above are computed, but typically this is accomplished through a quadrature rule. In some ``data-driven" scenarios, this quadrature rule often comes as a Monte Carlo rule from an empirical ensemble.

We discuss six procedures below; in practice, only the last two are computationally stable, but they are all useful for comparison purposes. The first procedure works only for very special Freud weights, i.e., those with exponential behavior. 

%The classical approach express the recursion coefficients via moments determinants with ill-conditioning issue. By choosing a positive measure close to $\dx{\mu}$, the modified Chebyshev's algorithm provides a recursion routine in terms of the introduced ``mixed moments" that fixes the issue. Stieltjes procedure iteratively computes orthogonal polynomials with respect to $\dx{\mu}$ and recursion coefficients, while ``arbitrary" polynomials chaos expansion method constructs orthogonal polynomials by polynomial expansion but compute coefficients in the same way. A Lanczos-type algorithm obtains the coefficients by computing an orthogonal similarity transformation given nodes and weights for a discrete measure. Our ``predictor-corrector" method makes initial predictions for coefficients and computes corrections based on these guesses. The ``predictor-corrector-Lanczos" method, which is the combination of ``predictor-corrector" and stabilized Lanczos, can be applied to computing recursion coefficients for multi-components measure. Table \ref{tab:notation} gives the abbreviations and positions for these methods.

\subsection{\DP: Freud weights and discrete Painlev\'{e} equations}\label{subsec:DP}\hfill

Freud weights, named after G\'{e}za Freud who studied them in the 1970s \cite{freud1976coefficients}, have the following form:
\begin{align}\label{eq:freud_weight}
\dx{\mu}(x) &= |x|^\rho \exp(-|x|^\alpha) \dx{x}, & \rho &> -1, \alpha > 0.
\end{align}
Observe that Freud weights are symmetric, which implies that $a_n = 0$ for $n \geq 0$, and therefore only the $b_n$ coefficients need be computed. Freud gave a recurrence relation for the recurrence coefficients $b_n$ when $\alpha = 2, 4, 6$. 
%For these cases he found the asymptotic behavior for the recurrence coefficients $b_n$ and formulated a conjecture for it for every $\alpha > 0$. 
The connection between Freud weights and discrete Painlev\'{e} equations was first pointed out by Magnus \cite{magnus1996preud}. 
%Walter and Katholieke \cite{assche_discrete_2005} showed that the recurrence coefficients satisfy a non-linear recurrence relation which corresponds to the discrete Painlev\'{e} I equations and its hierarchy.
In the case of $\alpha = 4$, one can derive the following recurrence relation for $n \geq1$ by letting $x_n \coloneqq 2 b_n^2$:
\begin{align}\label{eq:recursion_freud4}
  x_{n+1} &= \frac{1}{x_n} \left( n + \frac{\rho}{2} \left(1 + (-1)^n\right)\right) - x_n - x_{n-1}, & x_0 &= 0, & \quad x_1 &= \frac{2 \Gamma{(\frac{3+\rho}{4}})}{\Gamma{(\frac{1+\rho}{4}})}.
\end{align}
See, e.g., \cite[Section 2.2]{assche_discrete_2005}. This recurrence relation is a discrete Painlev\'{e} I equation \cite{magnus1996preud} that is useful for theoretical analysis. For example, it can be used to prove Freud's conjecture, which is a statement about asymptotic behavior of the $b_n$ coefficients. For $\alpha = 4$ in this section, Freud's conjecture states
\begin{equation}\label{eq:conjecture_freud4}
\lim_{n\rightarrow\infty} \frac{b_n}{n^{1/4}} = \frac{1}{\sqrt[4]{12}}.
\end{equation}
A more general resolution of Freud's conjecture using alternative methods is provided in \cite{lubinsky_proof_1988}.

Similarly, when $\alpha = 6$, by letting $y_n \coloneqq b_n^2$, a fourth-order nonlinear recurrence relation for $n\geq2$ \cite[Section 2.3]{assche_discrete_2005} is given by
\begin{equation}\label{eq:recursion_freud6}
\begin{split}
6 y_n \left(y_{n-2} y_{n-1} + y_{n-1}^2 + 2 y_{n-1} y_n + y_{n-1} y_{n+1} + y_n^2 + 2 y_n y_{n+1} + y_{n+1}^2 + y_{n+1} y_{n+2}\right) \\
= n + \frac{\rho}{2} \left(1 + (-1)^n\right),
\end{split}
\end{equation}
with initial condition
\begin{align*}
y_0 &= 0, & y_1 &= \frac{\Gamma{(\frac{3+\rho}{6})}}{\Gamma{(\frac{1+\rho}{6})}}, & \\
 y_2 &= \frac{\Gamma{(\frac{5+\rho}{6})}}{\Gamma{(\frac{3+\rho}{6})}} - y_1, &
 y_3 &= \frac{\Gamma{(\frac{7+\rho}{6})}}{y_2 y_1 \Gamma{(\frac{1+\rho}{6})}} - \frac{2(y_1+y_2)\Gamma{(\frac{5+\rho}{6})}}{y_2 y_1 \Gamma{(\frac{1+\rho}{6})}} + \frac{(y_1+y_2)^2\Gamma{(\frac{3+\rho}{6})}}{y_2 y_1 \Gamma{(\frac{1+\rho}{6})}}.
\end{align*}
In this case, Freud's conjecture states
\begin{equation}\label{eq:conjecture_freud6}
\lim_{n\rightarrow\infty} \frac{b_n}{n^{1/6}} = \frac{1}{\sqrt[6]{60}}.
\end{equation}

Note the computation of recursion coefficients via \eqref{eq:recursion_freud4} and \eqref{eq:recursion_freud6} is quite straightforward, but is also very unstable. Nevertheless, there is a unique positive solution \cite{lew1983nonnegative}; hence, a small (e.g., machine roundoff) error in $x_1$ or $y_1$ quickly results in the loss of positivity of $x_n$ or $y_n$. Numerical solutions follow the exact asymptotic behavior well until large deviations from the true solution eventually appear, cf. Figure \ref{fig:freud}.

%%%
\subsection{\HD: Hankel Determinants}\label{subsec:HD}
Orthogonal polynomials as well as their recursion coefficients are expressible in determinantal form in terms of the moments of the underlying measure. Indeed, much of the classical theory of orthogonal polynomials is moment-oriented. One classical technique to express recurrence coefficients in terms of moments is via matrix determinants.

We introduce the Hankel determinant $\Delta_n$ of order $n$ in terms of the finite moments \eqref{eq:moments}, defined as
\begin{align}\label{eq:hankel_det}
\Delta_{-1} &= 1, &
\Delta_{0} &= 1, &
\Delta_n &= \det \bs{H}_n, &
\bs{H}_n &\coloneqq
\left(\begin{array}{cccc}
m_0 & m_1 & \cdots & m_{n-1} \\
m_1 & m_2 & \cdots & m_{n} \\
\vdots & \vdots & \ddots & \vdots \\
m_{n-1} & m_{n} & \cdots & m_{2n-2}
\end{array}\right), &
n &\in \N.
\end{align}

These determinants of Gram matrices are associated to the $\mu$-inner product, using a basis of monomials. In addition, we define determinants $\Delta_n^\prime$ of modified Hankel matrices, where the modification is to replace the last column of $\bs{H}_n$ by the last column of $\bs{H}_{n+1}$ with the trailing entry removed,
\begin{align*}
\Delta_0^{\prime} &= 0, &
\Delta_1^{\prime} &= m_1, &
\Delta_n^{\prime} &= 
\begin{vmatrix}
m_0 & m_1 & \cdots & m_{n-2} & m_{n} \\
m_1 & m_2 & \cdots & m_{n-1} & m_{n+1} \\
\vdots & \vdots & \vdots & \vdots & \vdots \\
m_{n-1} & m_{n} & \cdots & m_{2n-3} & m_{2n-1}
\end{vmatrix}, & n &= 2, 3, \dots \ .
\end{align*}
Along with $b_0 = \sqrt{m_0}$, the orthogonal polynomial recurrence coefficients can be computed explicitly from these determinants, cf.\cite[Theorem 2.2]{gautschi_orthogonal_2004},
\begin{align}\label{eq:ab_HD}
  a_n &= \frac{\Delta_{n}^{\prime}}{\Delta_{n}} - \frac{\Delta_{n-1}^{\prime}}{\Delta_{n-1}}, &
  b_n &= \sqrt{\frac{\Delta_{n+1} \Delta_{n-1}}{\Delta_n^2}}, &
  n\in \N.
\end{align}

The formulas \eqref{eq:ab_HD} are not practically useful as an algorithm to compute reucrrence coefficients since the Hankel matrices above are typically ill-conditioned. In particular, the map that computes recurrence coefficients from moments can be severely ill-conditioned \cite[Section 2.1.6]{gautschi_orthogonal_2004}.

%%%
\subsection{\aPC: ``Arbitrary" polynomial chaos expansions}\label{subsec:aPC}

The arbitrary polynomial chaos (\aPC{}), like all polynomial chaos expansion techniques, approximates the dependence of simulation model output on model parameters by expansion in an orthogonal polynomial basis. As shown in \cite{oladyshkin_data-driven_2012}, \aPC{} at finite expansion order demands the existence of only a finite number of moments and does not require the complete knowledge of a probability density function. Once we construct the polynomials such that they form an orthonormal basis for arbitrary distributions from the moment-based analysis, the recurrence coefficients can be derived using the \aPC{} expansion coefficients.

Our goal is, firstly, to construct the polynomials in \eqref{eq:expand_aPC} such that they form an orthonormal basis for arbitrary distributions. Instead of the normality condition, we will first introduce an intermediate auxiliary condition by demanding that the leading coefficients of all polynomials be equal to $1$.

We define the monic orthogonal polynomial $\pi_n(x)$ as
\begin{equation}\label{eq:expand_aPC}
  \pi_n(x) = \sum_{i=0}^n c_i^{(n)} x^i,
\end{equation}
where $c_i^{(n)}$ are expansion coefficients, and specifically, $c_n^{(n)} = 1, \forall n$. The general conditions of orthogonality for $\pi_n(x)$ with respect to all lower order polynomials can be written in the following form \cite[Section 3.1]{oladyshkin_data-driven_2012}:
\begin{align}\label{eq:lower_aPC}
  \int_{\Omega} x^k \left(\sum_{i=0}^n c_i^{(n)} x^i \right) \dx{\mu}(x) &= 0, & k &= 0, 1, \dots, n-1.
\end{align}
For each $n$, the system of equations given by \eqref{eq:lower_aPC} defines the unknown polynomial expansion coefficients in \eqref{eq:expand_aPC}. Using finite moments in \eqref{eq:moments}, the system can be reduced to
\begin{align*}
  \sum_{i=0}^n c_i^{(n)} m_{i+k} &= 0.
\end{align*}
Alternatively, the system of linear equations can be written in the more convenient matrix form,
\begin{equation}\label{eq:cnk-system}
\left(\begin{array}{cccc}
m_0 & m_1 & \cdots & m_{n} \\
m_1 & m_2 & \cdots & m_{n+1} \\
\vdots & \vdots & \ddots & \vdots \\
m_{n-1} & m_{n} & \cdots & m_{2n-1} \\
0 & 0 & \cdots & 1
\end{array}\right)
\left(\begin{array}{c}
c_0^{(n)} \\
c_1^{(n)} \\
\vdots \\
c_{n-1}^{(n)} \\
c_n^{(n)}
\end{array}\right) = 
\left(\begin{array}{c}
0 \\
0 \\
\vdots \\
0 \\
1
\end{array}\right).
\end{equation}
By defining the coefficient vector ${\bs{c}^{(n)}} = \left(c_0^{(n)}, c_1^{(n)}, \dots, c_n^{(n)}\right)^T$, the normalized coefficients $\bar{c}_i^{(n)}$ can be expressed in terms of $\bs{c}^{(n)}$ and Hankel matrices $\bs{H}_{n+1}$,
\begin{align}\label{eq:normalc}
  \bar{c}_i^{(n)} = \frac{c_i^{(n)}}{\sqrt{{\bs{c}^{(n)}}^T \bs{H}_{n+1} \bs{c}^{(n)}}}.
\end{align}
Together with $b_0 = \sqrt{m_0}$ and $c_{-1}^{(0)} \coloneqq 0$, the recurrence coefficients can be obtained from \eqref{eq:normalc} using \eqref{eq:ttr},
\begin{align}\label{eq:ab_aPC}
  a_n &= \frac{\bar{c}_{n-2}^{(n-1)} - b_n \bar{c}_{n-1}^{(n)}}{\bar{c}_{n-1}^{(n-1)}}, & 
  b_n &= \frac{\bar{c}_{n-1}^{(n-1)}}{\bar{c}_n^{(n)}}, &
  n &\in \N.
\end{align}

Thus, given the moments $m_i$, we first solve for the $c^{(n)}_k$ via \eqref{eq:cnk-system} and subsequently uses \eqref{eq:ab_aPC} to compute the recurrence coefficients. As with the Hankel determinant procedure in Section \ref{subsec:HD}, this procedure is susceptible to instability since the moment matrices in \eqref{eq:cnk-system} are typically unstable.

%%%
\subsection{\MC: Modified Chebyshev algorithm}\label{subsec:MC}

The previous techniques have used (monomial) moments directly and suffer from numerical stability issues. The classical Chebyshev algorithm \cite{chebyshev1859interpolation} still uses monomial moments, but it employs them through an iterative recursive approach to compute the recurrence coefficients. The technique in this section modifies the classical Chebyshev algorithm by using $\mu$-moments computed with respect to some other set of polynomials $\{q_k\}$. Typically, $q_k$ is chosen as a sequence of polynomials that are orthogonal with respect to another measure $\lambda$, where we require that the recurrence coefficients $c_n, d_n$ for $\lambda$ are known. The Modified Chebyshev algorithm is effective when $\lambda$ is chosen ``close" to $\mu$.

We define the ``mixed" moments as
\begin{align}\label{eq:mix_moments}
\sigma_{n,k} &= \int \pi_n(x) q_k(x) \dx{\mu(x)}, & n,k > -1,
\end{align}
where $\pi_n(x)$ are the monic orthogonal polynomials with respect to $\mu$. We denote $a_n, b_n$ as the recurrence coefficients of orthonormal polynomials $p_n(x)$ with respect to $\mu$. They can be used to formulate the three-term recurrence relation for monic orthogonal polynomials $\pi_n(x)$,
\begin{align}\label{eq:monic_ttr}
\pi_{n+1}(x) = (x - a_{n+1}) \pi_n(x) - b_n^2 \pi_{n-1}(x).
\end{align}

We define $c_k, d_k$ as recurrence coefficients of orthonormal polynomials $q_k(x)$. Plugging \eqref{eq:monic_ttr} into \eqref{eq:mix_moments}, the mixed moments $\sigma_{n,k}$, in turn, satisfies the recurrence relation below:
\begin{align}\label{eq:recursion_mod}
\sigma_{0,k} &= m_k, \\ \nonumber
\sigma_{n,k} &= d_k \sigma_{n-1,k-1} + (c_{k+1} - a_n) \sigma_{n-1,k} + d_{k+1} \sigma_{n-1,k+1} - b_{n-1}^2 \sigma_{n-2,k}.
\end{align}
\eqref{eq:recursion_mod} gives a routine to compute the first $N$ recurrence coefficients, which requires as input the first $2N-1$ modified moments $\{m_k\}_{k=0}^{2N-2}$ and $\{c_k, d_k\}_{k=0}^{2N-1}$.

Together with \eqref{eq:monic_ttr}, \eqref{eq:recursion_mod} and the fact that $\sigma_{-1,k} = 0$, we have the expression of the recurrence coefficients,
\begin{align}\label{eq:ab_MC}
a_1 &= c_1 + \frac{d_1 \sigma_{0,1}}{\sigma_{0,0}}, & a_n &= c_n + \frac{d_n \sigma_{n-1,n}}{\sigma_{n-1,n-1}} - \frac{d_{n-1} \sigma_{n-2,n-1}}{\sigma_{n-2,n-2}}, & n &= 2, 3, ..., \\ \nonumber
b_0 &= \sqrt{d_0 m_0}, & b_n &= \sqrt{\frac{d_{n} \sigma_{n,n}}{\sigma_{n-1,n-1}}}, & n&\in\N.
\end{align}

Given a positive measure $\mu$ on $\R$,  by choosing $\lambda$ near $\mu$ in some sense, we expect the algorithm is well, or better, conditioned \cite[Section 2.1.3]{gautschi_orthogonal_2004}.

%Algorithm \ref{alg:mod_cheb} can be schematically summarized by computing stencil \cite[Section 2.1.7]{gautschi_orthogonal_2004}. The computing stencil indicates the location of the five entries in the $\sigma$-tableau that are involved in (\ref{eq:recursion_mod}). The circled entry in the stencil is the one the algorithm computes in terms of the other four. The entries in boxes are those used to compute $\alpha_k, \beta_k$. The success of Chebyshev's algorithm depends on the ability to compute all required modified moments accurately and reliably.

%%%
\subsection{\SP: The Stiltjies procedure}\label{subsec:SP}

The previous procedures have used either monomial moments or general (mixed) moments with respect to a prescribed, fixed alternative basis $q_k$. In constrast, the Stieltjes procedure \cite{stieltjes1884some, gautschi_generating_1982} requires ``on-demand" computation of moments, i.e., the moments required are determined during the algorithm.  Starting with $b_0 = \left(\int \dx{\mu}\right)^{1/2}$ and $p_0(x) = 1/b_0$, $a_1$ can be computed from \eqref{eq:polymoments} with $q(x) = x p_0(x)^2$, which allows us to evaluate $p_1(x)$ by means of \eqref{eq:ttr}. $p_1(x)$, in turn. can be used to generate $b_1$. The formulae \cite[Section 2.2.3]{gautschi_orthogonal_2004} 
\begin{align}\label{eq:ab_SP}
a_n &= \int x p_{n-1}^2(x) \dx{\mu}, &
b_n &= \left(\int ( (x - a_n) p_{n-1}(x) - b_{n-1} p_{n-2}(x) )^2 \dx{\mu}\right)^{\frac{1}{2}}, &
n &\in\N,
\end{align}
for the recursion coefficients provides a natural iterative framework for computing them.
\subsection{\LZ: A Lanczos-type algorithm}\label{subsec:LZ}

We assume that the measure $\dx{\mu}$ is a discrete measure with finite support, i.e., \eqref{eq:mu} holds with $C = 0$ and $0 < M < \infty$. We wish to compute recurrence coefficients $(a_n,b_n)$ up to $n < M$, ensuring that orthogonal polynomials up to this degree exist. We could also consider applying this procedure to a finite discretization of a continuous measure; see \cite[Section 2.2.3.2 and Theorem 2.32]{gautschi_orthogonal_2004}.
%supported on a finite interval, say $[-1,1]$, and that the support points are also contained in $[-1,1]$. We consider this discrete measure under the condition that the discrete orthogonal polynomials $p_{k,M}(\cdot, \dx{\mu_M})$ will converge to the continuous orthogonal polynomials $p_k(\cdot, \dx{\mu})$ as $M\rightarrow \infty$ \cite[Theorem 2.32]{gautschi_orthogonal_2004}.

The Lanczos procedure produces recurrence coefficients for the discrete measure $\mu$, and utilizes the Lanczos algorithm that unitarily triangularizes a symmetric matrix. With $(\tau_j, \nu_j)_{j=1}^M$ the quadrature rule associated to the measure $\mu$ in \eqref{eq:mu}, we define 
\begin{align*}
  \sqrt{\bs{\nu}} &\coloneqq \left( \sqrt{\nu_1}, \; \sqrt{\nu_2}, \; \ldots\; \sqrt{\nu_M} \right)^T, & \bs{D} &\coloneqq \mathrm{diag}\left( \tau_1, \; \tau_2, \; \ldots, \; \tau_M \right).
\end{align*}
We define $\bs{Q}$ as a scaled $M \times M$ Vandermonde-like matrix,
\begin{align*}
  \bs{Q} &= \mathrm{diag}\left(\sqrt{\bs{\nu}}\right) \bs{V}, & \left(\bs{V}\right)_{j,k} = p_{j-1}(\tau_k),
\end{align*}
for $j, k = 1, \ldots, M$. Then, $\bs{Q}$ is an orthogonal matrix by orthonormality of $p_n$. The orthogonality and the three-term recurrence further imply that,
\begin{align*}
 \left(\begin{array}{cc} 1 & \bs{0}^T \\ \bs{0} & \bs{Q} \end{array}\right)
   \left(\begin{array}{cc} 1 & \sqrt{\bs{\nu}}^T \\ \sqrt{\bs{\nu}} & \bs{D} \end{array}\right)
 \left(\begin{array}{cc} 1 & \bs{0}^T \\ \bs{0} & \bs{Q}^T \end{array}\right)
    = 
  \left(\begin{array}{cc} 1 & b_0 \bs{e}_1^T \\ b_0 \bs{e}_1 & \bs{J}_M(\mu) \end{array}\right),
\end{align*}
where $\bs{e}_1 = (1, 0, 0, \ldots)^T \in \R^{M}$. The Lanczos algorithm, given the middle matrix on the left-hand side, computes the unitary triangularization above and outputs the right-hand side, which identifies the Jacobi matrix $\bs{J}_M$ in \eqref{eq:jacobi-matrix}, and, hence, the recurrence coefficients. See \cite[Section 2.2.3.2]{gautschi_orthogonal_2004} for more details. It is well known that the standard Lanczos algortihm is numerically unstable, so that stabilization procedures must be employed \cite{rutishauser1963jacobi,gragg1984numerically}. We use a ``double orthogonalization" stabilization technique to avoid instability. Our results suggest that, for discrete measures, this procedure is more accurate than all the alternatives, see Section \ref{subsec:discrete_cheb}.

\section{\PCL: A hybrid predictor-corrector Lanczos procedure}\label{sec:hybrid}

The main goal of this section is to describe a procedure by which we compute recurrence coefficients for $\mu$ of the form \eqref{eq:mu}. The procedure entails knowledge of the continuous weights $\{w_j\}_{j=1}^C$ and their respective supporting intervals, $\{I_j\}_{j=1}^C$, along with the discrete part of the measure encoded by the nodes and weights $\left(\tau_j, \nu_j\right)_{j=1}^M$. In section \ref{subsec:moments}, we will also utilize the singularity behavior of the weights $w_j$ dictated by the constants $\alpha_j$ and $\beta_j$ in \eqref{eq:singularity-behavior} to compute moments. 

Section \ref{subsec:PC} first introduces a new procedure to compute recurrence coefficients for a measure with a continuous density using polynomial moments. Section \ref{subsec:moments} then discusses our particular strategy for computing these moments. Finally, section \ref{subsec:PCL} introduces a procedure based on the multiple component approach in \cite{gautschi_generating_1982} for computing recurrence coefficients for a measure of general form \eqref{eq:mu}.

\subsection{\PC: Predictor-corrector method}\label{subsec:PC}
In this section, we describe a Stieltjes-like procedure for computing recurrence coefficients. Although this works for general measures, we are mainly interested in applying this technique for measures $\mu$ that have a continuous density. The high-level algorithm, like the previous ones we have discussed, is iterative. Suppose for some $n \geq 0$ we know the coefficient tableau,
\begin{align*}
  \begin{array}{cccccc}
                & a_1(\mu) & a_2(\mu) & \cdots & a_{n}(\mu) \\ 
       b_0(\mu) & b_1(\mu) & b_2(\mu) & \cdots & b_{n}(\mu).
  \end{array}
\end{align*}
These coefficients, via \eqref{eq:ttr}, define $p_0, \ldots, p_n$ that are orthonormal under a $\dx{\mu}$-weighted intergral. In order to compute $a_{n+1}$ and $b_{n+1}$, we make educated guesses for these coefficients, and correct them using computed moments. The procedure is mathematically equivalent to the Stieltjes procedure:
We define a new set of recurrence coefficients $\{\widetilde{a}_j, \widetilde{b}_j\}_{j=0}^{n+1}$, where
\begin{subequations}
\begin{align}\label{eq:coeff-ansatz}
  \widetilde{a}_{j} &= a_{j}, & \widetilde{b}_{j} &= b_{j}, \hskip 10pt j = 0, \ldots, n, \\
  \widetilde{a}_{n+1} &= a_{n}, & \widetilde{b}_{n+1} &= b_{n}, 
\end{align}
In particular, corrections $\Delta a_{n+1} \in \R$ and $\Delta b_{n+1} > 0$ exist such that 
\begin{align}\label{eq:coeff-diff}
  a_{n+1} &= \widetilde{a}_{n+1} + \Delta a_{n+1}, & 
  b_{n+1} &= \widetilde{b}_{n+1} \Delta b_{n+1}.
\end{align}
\end{subequations}
Our procedure will compute the corrections $\Delta a_{n+1}$ and $\Delta b_{n+1}$. The tableau of coefficients $\widetilde{a}_{n+1}$ and $\widetilde{b}_{n+1}$ 
\begin{align*}
  \begin{array}{ccccc}
            & a_1(\mu) & \cdots & a_{n}(\mu) & \widetilde{a}_{n+1}(\mu) \\ 
   b_0(\mu) & b_1(\mu) & \cdots & b_{n}(\mu) & \widetilde{b}_{n+1}(\mu),
  \end{array}
\end{align*}
can be used with \eqref{eq:ttr} to generate the polynomials $p_0, \ldots, p_n$, along with $\widetilde{p}_{n+1}$, defined as
\begin{align}\label{eq:ptilde}
  \widetilde{b}_{n+1} \widetilde{p}_{n+1} &\coloneqq (x - \widetilde{a}_{n+1}) p_n - b_n p_{n-1}.
\end{align}
Since $\widetilde{p}_{n+1}$ and $p_{n+1}$ were generated using the same coefficients $(a_j, b_j)$ up to index $j = n$, then they are both orthogonal to all polynomials of degree $n-1$ or less. However, $\widetilde{p}_{n+1}$ is not orthogonal to $p_n$ in general. We can choose $\Delta a_{n+1}$ to enforce this orthogonality, which requires computing a polynomial moment.

Once $a_{n+1} = \widetilde{a}_{n+1} + \Delta a_{n+1}$ is successfully computed, we can similarly define another degree-$(n+1)$ polynomial $\widehat{p}_{n+1}$ through the relation,
%Since both $\widetilde{p}_{j}$ and $\p_j$ share the same coefficients , but since both polynomials share the same coefficients $\{(a_j, b_j)\}_{j=0}^n$, then \textit{both} $\widetilde{p}_{n+1}$ and $p_{n+1}$ are orthogonal to $\{p_j\}_{j=0}^{n-1}$. We claim that $\Delta a_{n+1}$ can be computed by evaluating the $\dx{\mu}$-weighted inner product between $p_n$ and $\widetilde{p}_{n+1}$. Once $\Delta a_{n+1}$ is computed then we have $a_{n+1}$ through \eqref{eq:coeff-diff}, and we define a second auxilliary polynomial $\widehat{p}_{n+1}$ defined as
\begin{align}\label{eq:phat}
  \widetilde{b}_{n+1} \hat{p}_{n+1} &\coloneqq (x - a_{n+1}) p_n - b_n p_{n-1}.
\end{align}
This polynomial differs from $p_{n+1}$ by only a multiplicative constant, which can again be determined through a moment computation and used to compute $\Delta b_{n+1}$. We formalize the discussion above through the following result:
%this polynomial is computable, differs from $b_{n+1}$ only by a multiplicative constant. We codify the above discussion in the following result.
\begin{lemma}\label{lemma:Gcorrections}
  With $\widetilde{p}_{n+1}$ and $\hat{p}_{n+1}$ defined as in \eqref{eq:ptilde} and \eqref{eq:phat}, respectively, let
  \begin{subequations}\label{eq:G-entries}
  \begin{align}
    \label{eq:G-offdiag}    G_{n,n+1} &\coloneqq \int_{\R} p_n(x) \widetilde{p}_{n+1} (x) \dx{\mu}(x), \\
    \label{eq:G-diag}    G_{n+1,n+1} &\coloneqq \int_{\R} \hat{p}^2_{n+1} (x) \dx{\mu}(x),
  \end{align}
  \end{subequations}
  Then,
  \begin{align}\label{eq:Delta-coeffs}
    \Delta a_{n+1} &= G_{n,n+1} b_n, & \Delta b_{n+1} &= \sqrt{ G_{n+1,n+1}}.
  \end{align}
\end{lemma}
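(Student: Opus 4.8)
The plan is to derive both correction formulas directly from the orthonormality of the family $\{p_j\}$ and the three-term recurrence \eqref{eq:ttr}, with essentially no machinery beyond integration against $\dx{\mu}$. The one conceptual observation that drives everything is that $\hat{p}_{n+1}$ defined in \eqref{eq:phat} is not genuinely a new polynomial: it is the true orthonormal polynomial $p_{n+1}$ up to a scalar factor. Once this is seen, both claims reduce to one-line moment computations.

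First I would establish the formula for $\Delta b_{n+1}$. Rewriting the recurrence \eqref{eq:ttr} at index $n$ in the ``ascending'' form $b_{n+1} p_{n+1} = (x - a_{n+1}) p_n - b_n p_{n-1}$ and comparing the right-hand side with the definition \eqref{eq:phat} of $\hat p_{n+1}$, I obtain $\widetilde{b}_{n+1} \hat{p}_{n+1} = b_{n+1} p_{n+1}$, hence $\hat{p}_{n+1} = (b_{n+1}/\widetilde{b}_{n+1}) p_{n+1}$. Squaring and integrating, orthonormality of $p_{n+1}$ gives $G_{n+1,n+1} = (b_{n+1}/\widetilde{b}_{n+1})^2$. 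Taking the square root and invoking $b_{n+1} = \widetilde{b}_{n+1}\, \Delta b_{n+1}$ from \eqref{eq:coeff-diff} yields $\Delta b_{n+1} = \sqrt{G_{n+1,n+1}}$, since $\Delta b_{n+1} > 0$.

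Next I would treat $\Delta a_{n+1}$. Substituting the predicted values $\widetilde{a}_{n+1} = a_n$, $\widetilde{b}_{n+1} = b_n$ into \eqref{eq:ptilde}, multiplying by $p_n$, and integrating against $\dx{\mu}$, the term involving $\int_\R p_n p_{n-1} \dx{\mu}$ vanishes by orthogonality, leaving $b_n\, G_{n,n+1} = \int_\R x\, p_n^2 \dx{\mu} - a_n$. The remaining integral is evaluated from the recurrence itself: pairing $x p_n = b_n p_{n-1} + a_{n+1} p_n + b_{n+1} p_{n+1}$ with $p_n$ and using orthonormality gives $\int_\R x\, p_n^2 \dx{\mu} = a_{n+1}$. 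Therefore $b_n\, G_{n,n+1} = a_{n+1} - a_n = \Delta a_{n+1}$, where the last equality uses $a_{n+1} = \widetilde a_{n+1} + \Delta a_{n+1} = a_n + \Delta a_{n+1}$. This is exactly the claimed expression $\Delta a_{n+1} = G_{n,n+1}\, b_n$.

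There is no serious obstacle here: the calculations are short, and the heart of the matter is simply recognizing the proportionality $\hat p_{n+1} \propto p_{n+1}$ and the standard identity $\int_\R x\, p_n^2 \dx{\mu} = a_{n+1}$. The only points requiring care are careful bookkeeping of the predicted-versus-true coefficient substitutions and confirming that $\widetilde{p}_{n+1}$ is orthogonal to all polynomials of degree at most $n-1$ (so that $G_{n,n+1}$ really is the sole off-diagonal obstruction to orthogonality), which was already observed in the discussion preceding the statement.
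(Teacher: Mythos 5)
Your proof is correct and takes essentially the same route as the paper: both arguments use only orthonormality and the three-term recurrence \eqref{eq:ttr}, and your treatment of $\Delta b_{n+1}$ (via $\widetilde{b}_{n+1}\hat{p}_{n+1} = b_{n+1}p_{n+1}$ and \eqref{eq:coeff-diff}) is identical to the paper's. The only cosmetic difference is in the $\Delta a_{n+1}$ step, where the paper expands $\widetilde{p}_{n+1} = \Delta b_{n+1}\, p_{n+1} + \frac{\Delta a_{n+1}\Delta b_{n+1}}{b_{n+1}}\, p_n$ and reads off the inner product with $p_n$, while you pair the defining relation \eqref{eq:ptilde} directly against $p_n$ and invoke the standard identity $a_{n+1} = \int_{\R} x\, p_n^2(x)\, \dx{\mu}(x)$ --- the same two-line computation organized differently.
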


\begin{proof}
  Starting from the definition \eqref{eq:ptilde} for $\widetilde{p}_{n+1}$, we replace $x p_n$ with the right-hand side of \eqref{eq:ttr}, yielding,
  \begin{align}\nonumber
    \widetilde{p}_{n+1} &= \Delta b_{n+1} \left[ \frac{1}{b_{n+1}} \left( x - a_{n+1} \right) p_n - b_n p_{n-1} + \Delta a_{n+1} \frac{1}{b_{n+1}} p_n\right] \\\label{eq:lemma-temp-1}
    &= \Delta b_{n+1} p_{n+1} + \frac{\Delta a_{n+1} \Delta b_{n+1}}{b_{n+1}} p_n
  \end{align}
  Thus, due to orthogonality of $\{p_j\}_{j \geq 0}$, we have
  \begin{align*}
    G_{n, n+1} &= \int p_n(x) \widetilde{p}_{n+1}(x) \dx{\mu}(x) \stackrel{\eqref{eq:lemma-temp-1}}{=} \frac{\Delta a_{n+1} \Delta b_{n+1}}{b_{n+1}} \stackrel{\eqref{eq:coeff-diff}}{=} \frac{\Delta a_{n+1}}{b_n},
  \end{align*}
  which shows the first relation in \eqref{eq:Delta-coeffs}. To show the second relation, first we combine \eqref{eq:ttr} and \eqref{eq:phat} to show,
  \begin{align*}
    \widetilde{b}_{n+1} \hat{p}_{n+1}(x) = (x - a_{n+1}) p_n - b_n p_{n-1} = b_{n+1} p_{n+1},
  \end{align*}
  so that 
  \begin{align*}
    G_{n+1,n+1} = \int \hat{p}_{n+1}^2(x) \dx{\mu}(x) = \left(\frac{b_{n+1}}{\widetilde{b}_{n+1}}\right)^2 \int p_{n+1}^2(x) \dx{\mu}(x) = (\Delta b_{n+1})^2,
  \end{align*}
  proving the second relation.
\end{proof}

The results \eqref{eq:Delta-coeffs} and \eqref{eq:Delta-coeffs} are the proposed approach: The moments $G_{n,n+1}$ and $G_{n+1,n+1}$ in \eqref{eq:G-offdiag} and \eqref{eq:G-diag} are polynomial moments that can be computed. We can subsequently use \eqref{eq:Delta-coeffs} and \eqref{eq:coeff-diff} to compute the desired $a_{n+1}$ and $b_{n+1}$.

The methodology of this section can then be iterated in order to compute as many recurrence coefficients $a_n$ and $b_n$ as desired. However, we must compute the $G_{n,n+1}$ and $G_{n+1,n+1}$ coefficients (which are similar to the moments required by the Stieltjes procedure). The main difference in our algorithm is that we use moments to compute $a_{n+1} - a_n$ and $b_{n+1}/b_n$ that are typically close to 0 and 1, respectively, instead of simply $a_n$ and $b_n$, which in general can be arbitrarily small or large numbers. We next summarize one particular strategy for computing these moments assuming that a type of characterization of $\mu$ is available. 

%%%
\subsection{Computation of polynomial moments}\label{subsec:moments}

The previous section shows that we can compute recurrence coefficients for the measure $\mu$ if we can compute some of its moments, in particular $G_{n,n+1}$ and $G_{n+1,n+1}$. We briefly describe in this section how we compute moments for measures of the form \eqref{eq:mu} with knowledge of the singularity behavior in \eqref{eq:singularity-behavior}. The moment of a polynomial $q$ for $\mu$ can be written as
\begin{align*}
  \int q(x) \dx{\mu}(x) = \sum_{j=1}^C \int_{I_j} q(x) w_j(x) \dx{x} + \sum_{j=1}^M \nu_j q(\tau_j),
\end{align*}
so that the only difficult part is to compute $\int_{I_j} q(x) w_j(x) \dx{x}$ for each $j$. 

Suppose first that $I_j$ is compact, i.e., that $I_j = [\ell, r]$ for finite $\ell, r$. Then we rewrite the integral as
\begin{align*}
  \int_{I_j} q(x) w_j(x) \dx{x} &= \frac{r-\ell}{2} \int_{-1}^1 q(A(u)) w_j(A(u)) \dx{u}, & A(u) \coloneqq \left(\frac{r-\ell}{2}\right) u + \frac{r + \ell}{2}.
\end{align*}
$w_j$ obeying the limiting conditions \eqref{eq:singularity-behavior} with constants $\alpha_j,\beta_j$ implies that $w_j(A(u))$ behaves like $(1 - u^{\alpha_j})$ near $u = 1$, and like $(1 + u^{\beta_j})$ near $u = -1$. When $\alpha_j = \beta_j = 0$, then a global $\dx{x}$-Gaussian quadrature rule will be efficient in evaluating this integral, but the accuracy will suffer when either constant differs from 0. To address this problem, we can further rewrite the integral as:
\begin{align*}
  \int_{I_j} q(x) w_j(x) \dx{x} = \frac{r-\ell}{2} \int_{-1}^1 q(A(u)) \omega_j(u) \dx{\mu}^{(\alpha_j,\beta_j)}(u),
\end{align*}
where $\mu^{(\alpha_j,\beta_j)}$ is a Jacobi measure on $[-1,1]$, and $\omega_j$ is $w_j$ multiplied by the appropriate factors,
\begin{align*}
  \dx{\mu}^{(\alpha_j,\beta_j)}(u) &= (1-u)^{\alpha_j} (1+u)^{\beta_j} \dx{x}, &
  \omega_j(u) &\coloneqq w_j(A(u)) (1-u)^{-\alpha_j} (1+u)^{-\beta_j}.
\end{align*}
The advantage of this formulation is that $\omega_j$ is now smooth at the boundaries $u = \pm 1$, and if in addition it is smooth on the interior of $[-1,1]$, then a Jacobi $(\alpha_j,\beta_j)$-Gaussian quadrature rule will efficiently evaluate the integral. Therefore, if $\left(u_k, \lambda_k\right)_{k=1}^K$ is a $K$-point Jacobi $(\alpha_j, \beta_j)$-Gaussian quadrature rule, we approximate the integral as
\begin{align*}
  \int_{I_j} q(x) w_j(x) \dx{x} \approx \sum_{k=1}^K \lambda_k \omega_j(u_k) q(A(u_k)),
\end{align*}
where the nodes and weights can be computed through the spectrum of $J_K(\mu^{(\alpha_j, \beta_j)})$ since the recurrence coefficients of these measures are explicitly known. In particular, all the quadrature nodes $u_k$ lie interior to $[-1,1]$, so that the above procedure does \textit{not} require evaluation of $\omega_j$ at $u = \pm 1$. We adaptively choose $K$, i.e., increasing $K$ until the difference between approximations is sufficiently small.

\subsection{\PCL: A hybrid Predictor-corrector Lanczos method}\label{subsec:PCL}

The full procedure we describe in this section combines the strategies in Sections \ref{subsec:PC} and \ref{subsec:moments}, along with the (stabilized) Lanczos procedure in Section \ref{subsec:LZ}. Assuming that we \textit{a proiri} know that the first $N$ recurrence coefficients $\{a_n, b_n\}_{n=0}^{N-1}$ are required for $\mu$, then the main idea here is to construct a fully discrete measure $\nu$ whose moments up to degree $2N-2$ match those of $\mu$.

We accomplish this as follows: Recall that the continuous densities $\{w_j\}_{j=1}^C$ of the measure $\mu$ in \eqref{eq:mu} are known, along with their boundary singularity behavior in \eqref{eq:singularity-behavior}. Then for each $j$, the \PC{} procedure in sections \ref{subsec:PC} and \ref{subsec:moments} can be used to compute the first $N+1$ recurrence coefficients for $w_j$, $\{a_{j,n}, b_{j,n}\}_{n=0}^{N}$. Using these recurrence coefficients, an $N$-point Gaussian quadrature rule $(x_{j,k}, \lambda_{j,k})_{k=1}^N$ can be computed that exactly integrates all polynomials up to degree $2N-1$ with respect to the weight $w_j$:
\begin{align*}
  \int_{I_j} q(x) w_j(x) \dx{x} &= \sum_{k=1}^N \lambda_{j,k} q\left(x_{j,k}\right), & \deg q &\leq 2N-1.
\end{align*}
After this quadrature rule is computed for every $j = 1, \ldots, C$, the discrete measure $\nu$, defined as 
\begin{align}\label{eq:nu}
  \nu \coloneqq \sum_{j=1}^C \sum_{k=1}^N \lambda_{j,k} \delta_{x_{j,k}} + \sum_{j=1}^M \nu_j \delta_{\tau_j},
\end{align}
and has moments that match those of $\mu$ up to degree $2N-1$. Once this procedure is completed, we employ the Lanczos procedure in Section \ref{subsec:LZ} to compute the first $N$ recurrence coefficients for $\nu$, which equal those for $\mu$. The main reason we employ the Lanczos scheme (as opposed to any other approach) is that, for discrete measures, the Lanczos procedure appears more empirically stable than all other procedures we consider, cf. Section \ref{subsec:multi}.

Note that if $C = 1$ and $M=0$, then the Lanczos procedure is not needed at all since $(a_{1,n}, b_{1,n})_{n=0}^{N-1}$ are the desired coefficients, and if $C = 0$, then only the Lanczos procedure need be queried since no quadrature is required.

The above is essentially a complete description of the \PCL{} algorithm. However, we include one additional adaptive procedure to ensure correct computation of the moments. Let $\{N_s\}_{s \geq 0}$ be an increasing sequence of positive integers. A strategy for determining the sequence of $N_s$ can be found in \cite{gautschi1994algorithm, gautschi_orthogonal_2004},
\begin{align*}
N_0 &= N, & N_s &= N_{s-1} + \Delta_s, & s&= 1, 2, \dots, \\
\Delta_1 &= 1, & \Delta_s &= 2^{\lfloor\frac{s}{5}\rfloor} N, & s &= 2, 3, \dots \ .
\end{align*}
We define $\nu_s$ as the measure \eqref{eq:nu} with $N \gets N_s$. We use \PCL{} to compute numerical approximations $\{a_n^{[s]}, b_n^{[s]} \}_{n \geq 0}$ to the recurrence coefficients for $\nu_s$. (I.e., we use \PC{} to compute the $N_s$-point quadrature rule $(x_{j,k}, \lambda_{j,k})_{k=1}^{N_s}$ and subsequently use \LZ{} to compute the recurrence coefficients for $\nu_s$.) With the (approximate) coefficients for $\nu_s$ and $\nu_{s-1}$, if the condition
\begin{align*}
  \left|b_n^{[s]} - b_n^{[s-1]} \right| &\le \epsilon |b_n^{[s]}|, & n &= 0, 1, \dots, N-1.
\end{align*}
is satisfied, then we return the computed coefficients for $\nu_s$. Otherwise, we set $s \gets s+1$ and test the condition above again. This adaptive procedure is similar to those employed in \cite{gautschi1994algorithm, gautschi_orthogonal_2004}. In our computations we set $\epsilon = 10^{-12}$, and we set an upper limit of $N_s$ as $N_s^{\max} = 10 N$ for all $s$, which will usually be satisfactory.
\section{Numerical Experiments}\label{sec:numerical}

We now present numerical examples to illustrate the performance of our algorithm by computing the first $N$ three-term recurrence coefficients for different types of measures $\mu$. Our results will consider all the algorithms in Table \ref{tab:notation}: the first six in section \ref{sec:methods} and the last two new procedures proposed in Section \ref{sec:hybrid}. We implement all the algorithms in Python. All the computations are carried out on a MacBook Pro laptop with a 3.1 GHz Intel(R) Core(TM) i5 processor and 8 GB of RAM.

Examples can be classified according to whether we have a way to compute the exact recurrence coefficients. When this is the case, we define $\{\hat{a}_n, \hat{b}_n\}_{n=0}^{N-1}$ as the first $N$ exact coefficients and $\{a_n, b_n\}_{n=0}^{N-1}$ as coefficients that are computed from any particular algorithm. The error $e_N$ can be denoted by an $\ell^2$-type norm,
\begin{equation}\label{eq:error}
e_N = \left(\sum_{n=0}^{N-1} \left[\left(a_n - \hat{a}_n\right)^2 + \left(b_n - \hat{b}_n\right)^2\right] \right)^{\frac{1}{2}}.
\end{equation}
If the exact coefficients are not available, we consider another error metric. If $\{p_n(x)\}_{n=0}^{N-1}$ is a polynomial basis produced through the three-term recurrence \eqref{eq:ttr} using the computed coefficients by $\{a_n, b_n\}_{n=0}^{N-1}$, then let $\bs{A}$ be an $N \times N$ matrix with entries
\begin{align*}
  \bs{A}_{m,n} &= \int_{\R} p_{n-1}(x)p_{m-1}(x) \dx{\mu}(x), & n, m &= 1, \ldots, N,
\end{align*}
which equals $\delta_{n,m}$ if $\hat{a}_n = a_n$ and $\hat{b}_n = b_n$. The new error indicator $f_N$ we compute is
\begin{equation}\label{eq:newerror}
  f_N = \left\| \bs{A} - \bs{I} \right\|_F,%\left[\sum_{n=0}^{N-1} \sum_{m=0}^{N-1} \left(A_{n,m} - I_{n,m}\right)^2 \right]^{\frac{1}{2}}.
\end{equation}
where $\|\cdot\|_F$ is the Frobenius norm on matrices and $\bs{I}$ is the $N \times N$ identity matrix. 

The computational timing results that measure efficiency are averaged over 100 runs of any particular algorithm.

\subsection{Freud weights}\label{subsec:freud}

One computational strategy for determining the recurrence coefficients for Freud weights of the form \eqref{eq:freud} on the entire real line is to use the (``non-modified") Chebyshev algorithm, which requires monomial moments and employs a recurrence similar to \eqref{eq:recursion_mod}. The monomial moments of \eqref{eq:freud} are explicitly computable as simple evaluations of the Euler Gamma function, but numerical instabilities typically develop in such an approach due to roundoff error; to combat this limitation, computations may be completed in variable precision arithmetic, resulting in a procedure that correctly computes the recurrence coefficients \cite{2009Variable}. In this section, we use this VPA procedure to generate recurrence coefficients treated as ``exact" for use in computing errors. In particular, we employ the \texttt{sr\_freud.m} routine from \cite{gautschivpa} that utilizes variable-precision arithmetic in Matlab \cite{mathworksvpa}. 

%In MATLAB, there exists a possibility of defining variables and perform numerical calculations in variable-precision arithmetic using the VPA bundle that is part of the Symbolic Math Toolbox. Variable in the name suggests that a user can set the number of significant digits arbitrary, by default it is 32 significant digits \cite{mathworksvpa}. It was used by Gautschi based on the the classical Chebyshev algorithm to compute the recurrence coefficients to any required digits of the polynomials orthogonal with respect to the Freud weight \cite{2009Variable} and implemented by \texttt{sr\_freud.m} \cite{gautschivpa}.

\begin{figure}[h]
  \centering
  \includegraphics[width=0.8\textwidth]{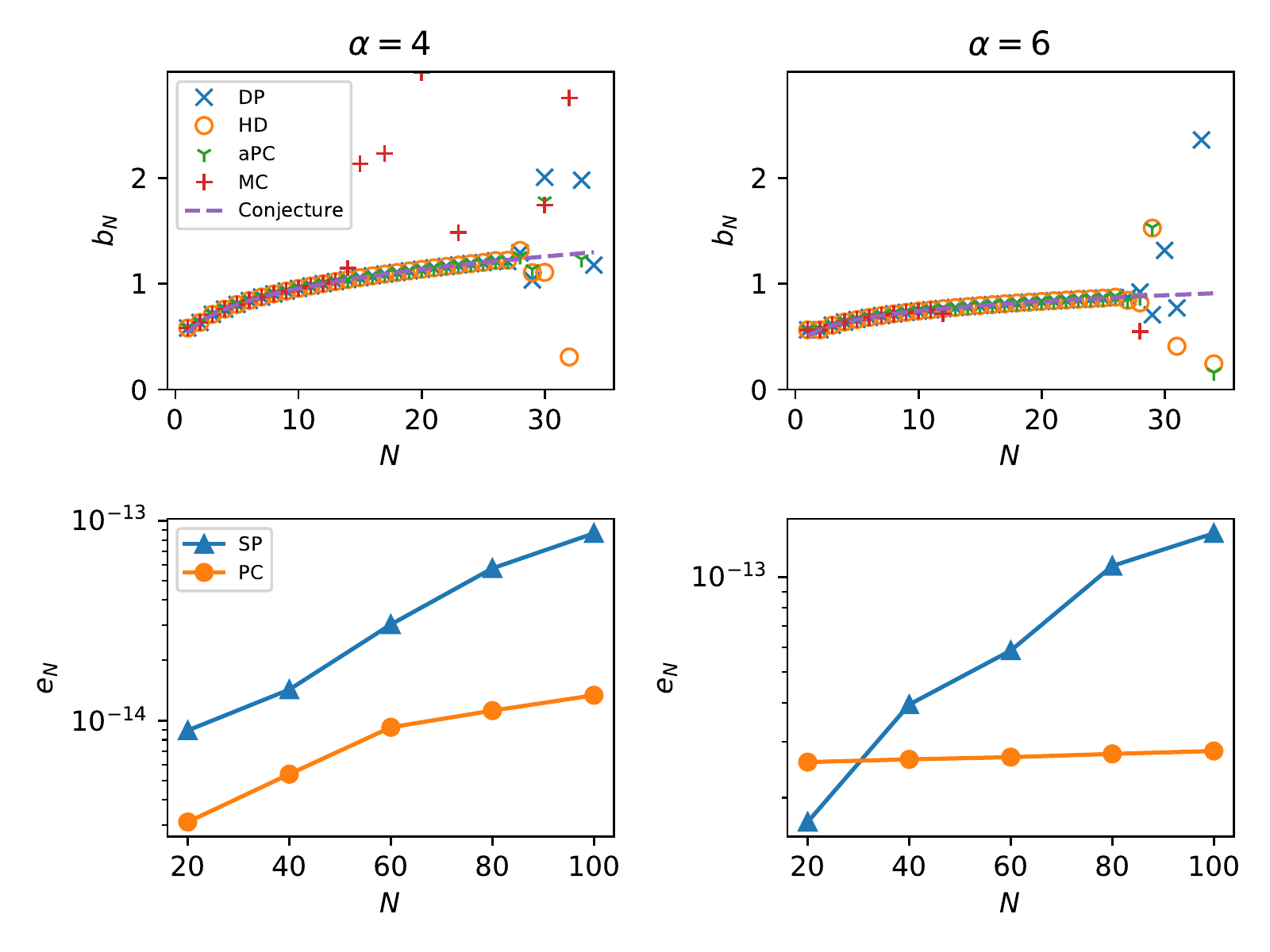}
  \caption{Example for Section \ref{subsec:DP}: the top two plots are recursion coefficients $b_N$ computed by \DP, \HD, \aPC{}, \MC{} and Freud conjecture in \eqref{eq:conjecture_freud4} and \eqref{eq:conjecture_freud6}. The two plots at the bottom show errors $e_N$ of \SP{} and \PC{}}
  \label{fig:freud}
\end{figure}

We compute recurrence coefficients using the \DP, \HD, \aPC, and \MC{} methods for Freud exponents $\alpha = 4, 6$. The \DP{} recursion for each of the two cases is simple, given by \eqref{eq:recursion_freud4} and \eqref{eq:recursion_freud6}, respectively. For the \MC{} method, we use Hermite orthogonal family for $q_k$ in \eqref{eq:mix_moments} that is orthogonal with respect to $\lambda$. The top two plots in Figure \ref{fig:freud} show that each of these methods is not computationally useful since instabilities develop quickly. In contrast, both the \SP{} and \PC{} approaches can effectively compute recurrence coefficients, which we show in the bottom two plots of Figure \ref{fig:freud}. In terms of efficiency, Table \ref{tab:freud_time}  illustrates that the ``exact" VPA procedure is several orders of magnitude more expensive than all other approaches, and that \SP{} and \PC{} are competitive. Code that reproduces this example is available in the routine \texttt{ex\_freud\_4.py} and \texttt{ex\_freud\_6.py} from \cite{Univariate_ttr_examples}.

\begin{table}
  \begin{center}
  \resizebox{0.95\textwidth}{!}{
    \renewcommand{\tabcolsep}{0.4cm}
    \renewcommand{\arraystretch}{1.3}
    {\scriptsize
      \begin{tabular}{c|cc|cc|cc|cc|cc}
      \toprule
        Method & \multicolumn{2}{c}{$N = 20$} & \multicolumn{2}{c}{$N =40$} & \multicolumn{2}{c}{$N = 60$} & \multicolumn{2}{c}{$N = 80$} & \multicolumn{2}{c}{$N = 100$} \\\midrule
         VPA & 18.86 & 19.13 & 99.38 & 101.10 & 293.44 & 300.41 & 631.20 & 633.20 & 1196.29 & 1362.86 \\
         \SP & 0.24 & 0.21 & 0.75 & 0.63 & 1.60 & 1.32 & 2.72 & 2.24 & 4.12 & 3.42 \\
         \PC & 0.25 & 0.22 & 0.75 & 0.65 & 1.60 & 1.34 & 2.72 & 2.27 & 4.12 & 3.40 \\
      \bottomrule
      \end{tabular}
    }
 }
  \end{center}
  \caption{Example for Section \ref{subsec:DP}: elapsed time (s) for Freud weight when $\alpha = 4$ (subcolumns on the left) and $\alpha = 6$ (subcolumns on the right).}\label{tab:freud_time}
\end{table}

%%%
\subsection{Piecewise smooth weight}\label{subsec:pws}

We consider the measure $\dx{\mu}(x) = w(x) \dx{x}$ on $[-1,1]$, where
\begin{align*}
\omega(x) = 
\begin{cases}
	\mid x\mid^\gamma (x^2 - \xi^2)^p (1 - t^2)^q, & \text{$x \in [-1, -\xi] \cup [\xi, 1]$}\\ 0, elsewhere, &\text{$0<\xi<1, p>-1, q>-1, \gamma \in \R$}.
\end{cases}
\end{align*}

For certain choices of $\gamma, p, q$, there is theory regarding the resulting orthogonal polynomials \cite{barkov1960some}, and such weights arise in applications \cite{wheeler1984modified}. In the special cases $\gamma = \pm1, p = q = \pm 1/2$, closed-form representations for the recurrence coefficients can be computed \cite{gautschi1984some}. For example, the exact formula for the recurrence coefficients for the case $\gamma = 1, p = q = -1/2, \eta = (1 - \xi)/(1 + \xi)$ is given by
\begin{align*}
\hat{b}_0 &= \sqrt{\pi}, & \hat{b}_1 &= \sqrt{\frac{1 + \xi^2}{2}}, \\
\hat{b}_{2n} &= \sqrt{\frac{(1 - \xi^2) (1 + \eta^{2n-2})}{4(1 + \eta^{2n})}}, & \hat{b}_{2n+1} &= \sqrt{\frac{(1 + \xi^2) (1 + \eta^{2n+2})}{4(1 + \eta^{2n})}}, & n \in \N,
\end{align*}
with $\hat{a}_n = 0$ for all $n$.

\begin{table}
  \begin{center}
  \resizebox{0.95\textwidth}{!}{
    \renewcommand{\tabcolsep}{0.4cm}
    \renewcommand{\arraystretch}{1.3}
    {\scriptsize
      \begin{tabular}{c|cc|cc|cc|cc|cc}
      \toprule
        Method & \multicolumn{2}{c}{$N = 20$} & \multicolumn{2}{c}{$N = 40$} & \multicolumn{2}{c}{$N = 60$} & \multicolumn{2}{c}{$N = 80$} & \multicolumn{2}{c}{$N = 100$} \\\midrule
         \HD & 6.05e-02 & 0.003 & --- & --- & --- & --- & --- & --- & --- & --- \\
         \aPC & 6.05e-02 & 0.001 & --- & --- & --- & --- & --- & --- & --- & --- \\
         \MC & 2.34e-15 & 0.001 & 1.00e+00 & 0.006 & --- & --- & --- & --- & --- & --- \\
         \SP & 4.73e-14 & 0.10 & 2.85e-13 & 0.28 & 3.85e-13 & 0.57 & 3.99e-13 & 0.93 & 4.62e-13 & 1.39 \\
         \PC & 9.08e-15 & 0.10 & 1.80e-14 & 0.29 & 3.13e-14 & 0.57 & 5.14e-14 & 0.94 & 7.27e-14 & 1.40 \\
      \bottomrule
      \end{tabular}
    }
 }
  \end{center}
  \caption{Example for Section \ref{subsec:pws}: errors $e_N$ (subcolumns on the left) and elapsed time (s) (subcolumns on the right) when $\gamma = 1, p = q = -1/2$. Here --- means a NaN value due to the numerical overflow from the instability of the corresponding method}\label{tab:pws}
\end{table}

A Legendre orthogonal family for $q_k$  in \eqref{eq:mix_moments} that is orthogonal with respect to $\lambda$ is chosen for the \MC{} method. For the choice $\gamma = 1, p = q = -1/2$ and $\xi = 1/10$, Table \ref{tab:pws} illustrates the accuracy and cost of the algorithms \HD{}, \aPC{}, \MC{}, \SP{}, and \PC{}. We observe that only the \SP{} and \PC{} approaches yield reasonable accuracy, with \PC{} being slightly more accurate. We omit results for other choices of $(\gamma, p, q)$, which produce nearly identical results. The results from this table can be produced from \texttt{ex\_pws.py} in \cite{Univariate_ttr_examples}.

%%%
\subsection{Transformed discrete Chebyshev}\label{subsec:discrete_cheb}

In the previous example, we compute the recurrence coefficients of ``continuous" orthogonal polynomials with respect to $\mu$ on bounded or unbounded supports. We now consider the support of $\mu$ that consists of a discrete set of points.

Given a positive number $M$, we define the nodes $\tau_j = (j-1)/M$ and $\nu_j = 1/M$ for $j = 1, 2, \dots, M$. Then,  the transformed discrete Chebyshev \cite[Example 2.26]{gautschi_orthogonal_2004} measure is given as
\begin{align*}
\dx{\mu}(x) &= \sum_{j=1}^M \frac{1}{M} \delta_{\frac{j-1}{M}} \dx{x}, & j = 1, 2, \dots, M,
\end{align*}
i.e., an equally spaced and equally weighted discrete measure on $[0,1)$. The recurrence coefficients are known explicitly if a linear transformation of variables is applied to the discrete Chebyshev measure with canonical support points \cite[Section 1.5.2]{gautschi_orthogonal_2004}. For a given size of supports, $M$, with $\hat{b}_0 = 1$,
\begin{align*}
\hat{a}_n &= \frac{M-1}{2M}, & \hat{b}_n &= \sqrt{\frac{1 - {(\frac{n}{M})}^2}{4(4 - {(\frac{n}{M})}^2)}}, & n = 1, 2, \dots, M-1.
\end{align*}

\begin{figure}[h]
  \centering
  \includegraphics[width=0.8\textwidth]{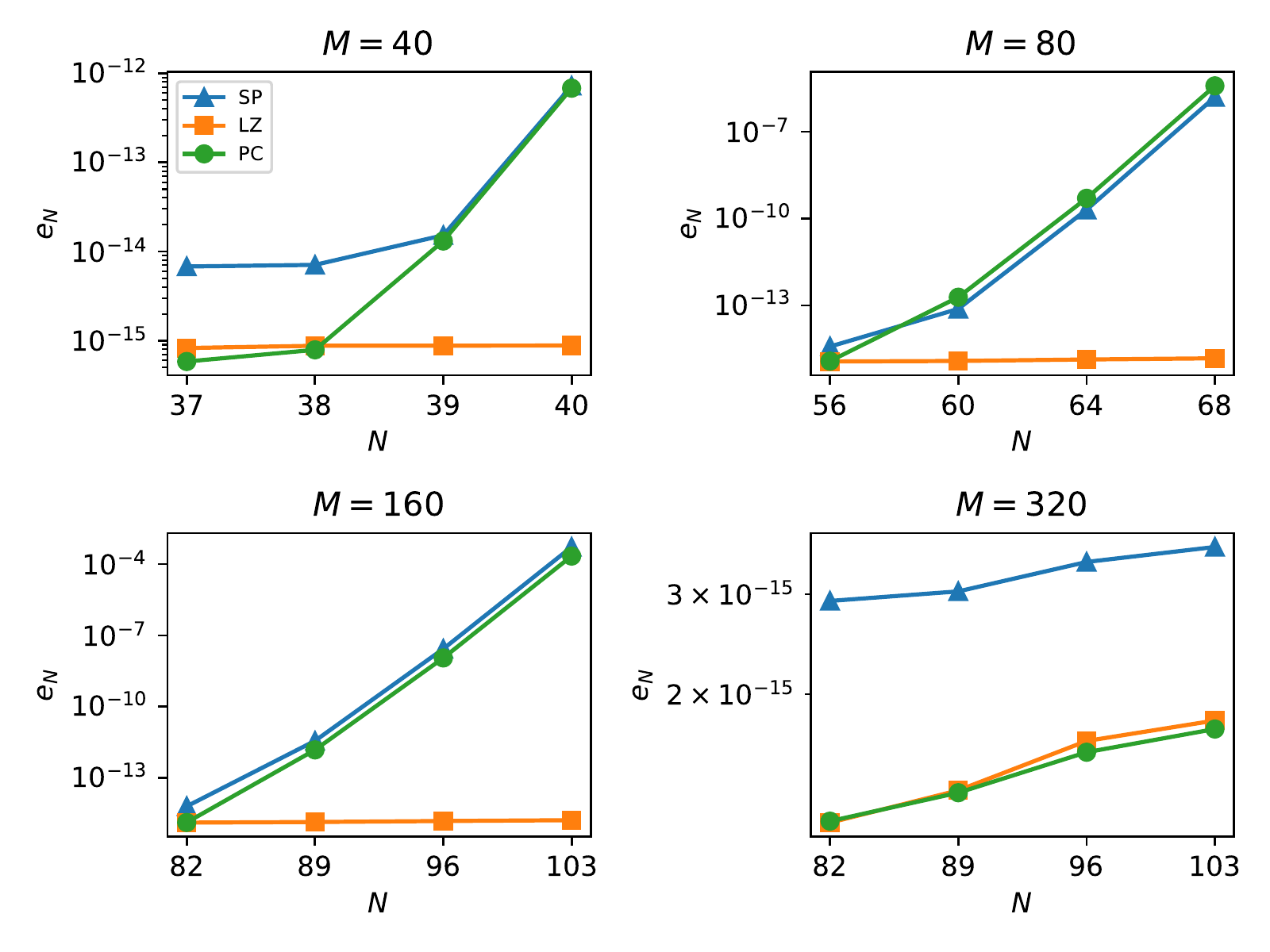}
  \caption{Example for Section \ref{subsec:discrete_cheb}: the first three plots compute errors $e_N$ for different $N$ portion of distinct $M$ and the last two plots for the same $N$ but for distinct $M$.}
  \label{fig:discrete_cheb}
\end{figure}
In Figure \ref{fig:discrete_cheb}, the methods \HD{}, \aPC{} and \MC{} are omitted since their instabilities develop very quickly. An NaN value appears when the required number of recurrence coefficients, $N$, is less than $20$. We compare the \SP{}, \LZ{} and \PC{} approaches on measure support sizes $M = 40, 80, 160, 320$. We observe that the \LZ{} approach is effective for all choices of $M$, and when $N$ is comparable to $M$, the \SP{} and \PC{} approaches become inaccurate. The lower two plots of Figure \ref{fig:discrete_cheb} show that when $M$ is notably larger than $N$, all three approaches produce good results. In particularly, all the numerical results in this subsection are are produced by \texttt{ex\_discrete\_cheb.py} in \cite{Univariate_ttr_examples}.
%From the last two plots, if one makes $M$ larger, both PC and SP have good performances comparing with Lanczos, although PC behaves a bit better. This is reasonable because as we mentioned in Section \ref{subsec:SP}, the problem of pseudostablity could be solved with a large enough $M$ in the case of equally or nearly equally nodes and weights. The consuming time are all relatively small, i.e. less than one second so that we don't consider too much about.

%%%
\subsection{Discrete probability density function}\label{subsec:discrete_convolution}

High-dimensional integration is a common problem in scientific computing arising from, for example, the need to estimate expectations in uncertainty quantification \cite{smith2013uncertainty, sullivan2015introduction}. 
%A popular strategy is to approximate the integrand with an easy-to-integrate approximation, such as a multivariate polynomial \cite{hyman2014accurate}. For any polynomial expressed in an orthogonal (with respect to the integration measure) polynomial basis, the integral of the polynomial is the coefficient associated with the constant term; this fact has contributed to the popularity of so-called polynomial chaos methods in uncertainty quantification \cite{le2010spectral} that utilize expansions in orthogonal polynomials. 
Many integrands for such integrals found in scientific computing applications map a large number of input variables to an output quantity of interest, but admit low-dimensional ridge structure that can be exploited to accelerate integration. A ridge function \cite{pinkus2015ridge} is a function $f: \R^m \rightarrow \R$ of the form
\begin{align*}
f(\bs{x}) = g(\bs{a}^T \bs{x}),
\end{align*}
where $\bs{a} \in \R^m$ is a constant vector called the ridge direction and $g: \R \rightarrow \R$ is the ridge profile. For such functions, we clearly have that $f$ depends only on a scalar variable $y \coloneqq \bs{a}^T \bs{x}$. In applications, we frequently wish to integrate $f$ with respect to some $m$-dimensional probability measure $\rho$ on $\bs{x}$,  which can be simplified by integrating over the scalar variable $y$ with respect to the univariate measure $\mu$ that is the push-forward of $\rho$ under the map $\bs{x} \mapsto \bs{a}^T \bs{x}$. Thus, the goal is to compute recurrence coefficients for $\mu$.

In practice the multivariate measure $\rho$ is known, but computing the univariate measure $\mu$ exactly is typically not feasible. However, an approximation to $\mu$ can be furnished using the procedure in \cite[Section 2.2]{glaws2019gaussian} that randomly generates $M$ i.i.d. samples $\{\bs{x}_j\}_{j=1}^M$ from $\rho$, and defines $\mu$ as a discrete measure supported on the projection of these samples onto the real line:
\begin{align*}
  \dx{\mu}(x) &= \sum_{j=1}^M \frac{1}{M} \delta_{\tau_j} \dx{x}, & \tau_j &\coloneqq \bs{a}^T \bs{x}_j.
\end{align*}
To compute quadrature rules with respect to this measure, we take $\rho$ as the uniform measure on the $m$-dimensional hypercube $[-1,1]^m$. Let $m = 25$, and $\bs{a} \in \R^{25}$ is chosen randomly. We then test for $M = 100, 300$.

%We represent the map from a computer model's physical inputs to its output quantity of interest as a scalar-valued function of $m$ independent variables
%\begin{align*}
%y &= f(\bs{x}), & y&\in \R, x\in \R^m
%\end{align*}
%and we assume the input space is weighted by a given probability density function $p(\bs{x})$, which describes uncertainty in the model inputs. For simplicity, we assume this density is uniform over the $m$-dimensional hypercube $[-1,1]^m$ such that
%\begin{align*}
%p(\bs{x}) =
%\begin{cases}
%\frac{1}{2^m} & \text{if $\| \bs{x}\|_{\infty} \le 1$} \\
%0 & \text{otherwise}
%\end{cases}
%\end{align*}
%Let $u = \bs{a}^T\bs{x}$ denote the scalar-valued input of the ridge profile. The linear transform $\bs{a}^T\bs{x}$ induces a new density, which we denote $q(u)$. Then we study the an integration nodes and weights $u_j, w_j$ with respect to $q(u)$ without requiring the knowledge of multivariate polynomials \cite[Section 2.2]{glaws2019gaussian}. 

\begin{table}
  \begin{center}
  \resizebox{0.95\textwidth}{!}{
    \renewcommand{\tabcolsep}{0.4cm}
    \renewcommand{\arraystretch}{1.3}
    {\scriptsize
      \begin{tabular}{c|cc|cc|cc|cc|cc}
      \toprule
        Method & \multicolumn{2}{c}{$N = 20$} & \multicolumn{2}{c}{$N =40$} & \multicolumn{2}{c}{$N = 60$} & \multicolumn{2}{c}{$N = 80$} & \multicolumn{2}{c}{$N = 100$} \\\midrule
         \HD & 1.69e-07 & 1.24e-07 & --- & --- & --- & --- & --- & --- & --- & --- \\
         \aPC & 7.72e-08 & 3.80e-08 & 2.03e+05 & 7.67e+05 & 1.35e+27 & 3.90e+25 & 5.85e+47 & 2.71e+55 & 4.75e+67 & 9.70e+72 \\
         \MC & 3.02e-09 & 3.60e-09 & --- & --- & --- & --- & --- & --- & --- & --- \\
         \SP & 2.74e-15 & 3.39e-15 & 9.26e-15 & 8.53e-15 & 5.94e-10 & 2.62e-14 & 4.00e+00 & 5.20e-14 & 7.48e+00 & 9.37e-14 \\
         \LZ & 4.75e-15 & 3.87e-15 & 2.95e-14 & 1.10e-14 & 3.45e-09 & 1.73e-14 & 1.86e+68 & 3.38e-14 & 2.50e+68 & 9.29e-14 \\
         \PC & 3.96e-15 & 4.54e-15 & 1.17e-14 & 9.57e-15 & 1,03e-09 & 1.50e-14 & 4.00e+00 & 2.47e-12 & 7.48e+00 & 1.41e-13 \\
      \bottomrule
      \end{tabular}
    }
 }
  \end{center}
  \caption{Example for Section \ref{subsec:discrete_convolution}: errors $f_N$ when $M = 100$ (subcolumns on the left) and $M = 300$ (subcolumns on the right). Here --- means a NaN value due to the numerical overflow from the instability of the corresponding method.}\label{tab:discrete_convolution}
\end{table}

Since we do not have an expression for the exact recurrence coefficients, we measure errors using the metric $f_N$ in \eqref{eq:newerror}.
%Mention that in this case, we don't have an exact solution for the recursion coefficient. Thus we evaluate the orthonormal polynomials using the coefficients obtained from routines and then verity their orthogonality by comparing with the identity matrices of proper size. 
As shown in Table \ref{tab:discrete_convolution}, the computed recursion coefficients are not as accurate when $N$ is closer to $M$, no matter what method is used. However, the methods \SP, \LZ{} and \PC{} all perform better when $M$ is large enough.
%It is suggested that $M = 10000$ for $N = 50$  \cite[Section 3.4]{glaws2019gaussian}to improve the discrete approximation of the density $q(u)$ since it evolves only trapezoidal rule without any function evaluations. Thus maybe one of the reasons that Lanczos behaves differently with that in Section \ref{subsec:discrete_cheb} is that $M$ has to be large enough to get a good discrete approximation of the density $q(u)$.
Code that reproduces this example is available in the routine \texttt{ex\_discrete\_convolution.py} in \cite{Univariate_ttr_examples}. 

%%%
\subsection{Multiple component: Chebyshev weight function plus a discrete measure}\label{subsec:multi}

The measure to be considered is the normalized Jacobi weight function on
$[-1,1]$ with a discrete $M$-point measure added to it,
\begin{align}\label{eq:multiple} 
\dx{\mu}(x) &= (\beta_0^J)^{-1} (1-x)^{\alpha} (1+x)^{\beta} \dx{x} + \sum_{j=1}^M \nu_j \delta_{\tau_j} \dx{x}, & \alpha, \beta &> -1, & \nu_j &> 0,
\end{align}
where $\beta_0^J = \int_{-1}^1 (1-x)^{\alpha} (1+x)^{\beta} \dx{x}$. The orthogonal polynomials belonging to the measure \eqref{eq:multiple} are explicitly known only in very special cases. The case of one mass point at one end point, that is, $M = 1, \tau_1 = -1$, has been studied and the recurrence coefficients can be computed with rather technical formulas \cite{chihara2011introduction,gautschi1994algorithm}. The exact recursion coefficients for $N = 1, 7, 18, 40$ are given in \cite[Table 2.11]{gautschi_orthogonal_2004}. For each of these particular $N$, we compute the fixed-$N$ error, donated by $e_N^{f} = \left(\left(a_N - \hat{a}_N\right)^2 + \left(b_N - \hat{b}_N\right)^2\right)^{1/2}$.
%\an{I don't know what ``pointwise" means above. Why do you have to define something different than $e_N$ anyway?}

Table \ref{tab:multi} shows results for the \HD{}, \aPC{}, \MC{}, \SP{}, and \PC{} approaches for the measure $\mu$ above. In addition, we compute results using the \LZ{} approach; note that the \LZ{} approach cannot directly be utilized on the measure \eqref{eq:multiple} since this measure has an infinite number of support points. Instead, the \LZ{} results shown in Table \ref{tab:multi} first use the discretization approach as described in Section \ref{subsec:LZ}, which replaces the continuous part of $\mu$ with a discrete Gaussian quadrature measure. The reason we include this test in Table \ref{tab:multi} is that it motivates the \PCL{} algorithm: if one can discretize measures, then the \LZ{} approach is frequently more accurate than alternative methods.
%by Chihara \cite{chihara2011introduction}, who managed to express the recursion coefficients in terms of the classical ones for the Jacobi weight function. The formulae are rather complicated; see, for example \cite{gautschi1994algorithm}.
%\bn{I added a sentence below for Lanczos because it is useful for some reasons: The results computed by \LZ{} is an approximation of $a_n, b_n$, but better than any other appproaches. And this is ones of the reasons that \LZ is utilized in \PCL{}, just like we said at the last of this subsetion. Thus, I want to keep \LZ{} in the table \ref{tab:multi}.}

%We can compute a finite discretization of $\mu$ by $\nu$ in \eqref{eq:nu} and approximate $a_n(\dx{\mu})\approx a_n(\dx{\nu}) , b_n(\dx{\mu})\approx b_n(\dx{\nu})$ using the Lanczos procedure in Section \ref{subsec:LZ}.

\begin{table}
  \begin{center}
  \resizebox{0.95\textwidth}{!}{
    \renewcommand{\tabcolsep}{0.4cm}
    \renewcommand{\arraystretch}{1.3}
    {\scriptsize
      \begin{tabular}{c|cc|cc|cc|cc}
      \toprule
        Method & \multicolumn{2}{c}{$N = 1$} & \multicolumn{2}{c}{$N =7$} & \multicolumn{2}{c}{$N = 18$} & \multicolumn{2}{c}{$N = 40$} \\\midrule
         \HD & 3.71e-14 & 2.22e-11 & 3.64e-12 & 1.81e-09 & 1.72e-04 & --- & --- & --- \\
         \aPC & 3.71e-14 & 2.22e-11 & 3.54e-12 & 1.81e-09 & 1.67e-04 & --- & --- & --- \\
         \MC & 3.71e-14 & 2.22e-11 & 3.63e-12 & 8.90e-11 & 3.02e-12 & 2.90e+00 & 3.87e-12 & 1.84e+00 \\
         \SP & 3.71e-14 & 2.22e-11 & 3.63e-12 & 5.44e-13 & 3.03e-12 & 3.80e-12 & 3.90e-12 & 2.48e-06 \\
         \LZ & 3.70e-14 & 2.22e-11 & 3.63e-12 & 5.44e-13 & 3.03e-12 & 3.80e-12 & 3.90e-12 & 2.10e-12 \\
         \PC & 3.71e-14 & 2.22e-11 & 3.63e-12 & 5.44e-13 & 3.02e-12 & 3.80e-12 & 3.90e-12 & 2.49e-06 \\
      \bottomrule
      \end{tabular}
    }
 }
  \end{center}
  \caption{Example for Section \ref{subsec:multi}: errors $e_{N}^{f}$ with one mass at $\tau_1 = -1$ with $\nu_1 = 0.5$ (subcolumns on the left) and $\tau_1 = 2$ with $\nu_1 = 1$ (subcolumns on the right). Here --- means a NaN value due to the numerical overflow from the instability of the corresponding method.}\label{tab:multi}
\end{table}

We generate the first 40 recursion coefficients for $\alpha = -0.6, \beta = 0.4$ of the Jacobi parameters in two cases: one mass at $\tau_1 = -1$ with strength $\nu_1 = 0.5$ and a single mass point of strength $\nu_1 = 1$ at $\tau_1 = 2$. The results, produced by routine \texttt{ex\_multi\_component.py} from \cite{Univariate_ttr_examples}, are shown in Table \ref{tab:multi}. \SP, \LZ{}, \PC{} and even \MC{} produce essentially identical results within machine precision in the first case. However, matters change significantly when a mass point is placed outside $[-1, 1]$, regardless of whether or not the other mass points on $[-1, 1]$ are retained \cite[Example 2.39]{gautschi_orthogonal_2004}. \SP{} and \PC{} become extremely unstable; this empirical superiority of the \LZ{} approach for discrete measures is the reason why the last step of the \PCL{} algorithm in Section \ref{subsec:PCL} is to utilize the Lanczos algorithm.

%%%
\subsection{General multiple component: continuous weight function plus a discrete measure}\label{subsec:gmulti}

In the previous example, we studied the case of a combination of Chebyshev weight and discrete measure. A quadrature for Chebyshev is trivial because it is one of the classical weights so that we can obtain the quadrature by known recursion coefficients. However, if the continuous weight is not of classical form, then we employ the \PCL{} algorithm in Section \ref{subsec:PCL}: We use \PC{} to compute recursion coefficients, leading to Gaussian quadrature nodes and weights for the continuous part, which is then combined with the discrete part as input to the \LZ{} algorithm. %Thus, the \PCL{} algorithm consists of two steps: first we apply \SP{} or \PC{} to discretize the continuous weight with Gauss quadrature, and then we use \LZ{} for combining the discretized continuous measure with the discrete part.

We consider the positive half-range Hermite measure plus a transformed discrete Chebyshev measure defined on $(-1,0]$,
\begin{align*}
\dx{\mu}(x) &= e^{-x^2} + \sum_{j=1}^{M} \nu_j \delta_{\tau_j} \dx{x}, & \tau_j &\coloneqq -\frac{j-1}{M}, & \nu_j &\coloneqq \frac{1}{M}.
\end{align*}

\begin{table}
  \begin{center}
  \resizebox{0.95\textwidth}{!}{
    \renewcommand{\tabcolsep}{0.4cm}
    \renewcommand{\arraystretch}{1.3}
    {\scriptsize
      \begin{tabular}{c|cc|cc|cc|cc|cc}
      \toprule
        $M$ & \multicolumn{2}{c}{$N = 20$} & \multicolumn{2}{c}{$N = 40$} & \multicolumn{2}{c}{$N = 60$} & \multicolumn{2}{c}{$N = 80$} & \multicolumn{2}{c}{$N = 100$} \\\midrule
         $20$ & 1.09e-14 & 7.47e-15 & 6.48e-14 & 1.63e-14 & 1.46e-10 & 6.61e-13 & 2.41e-03 & 5.63e-12 & 1.66e+07 & 3.27e-09 \\
         $40$ & 6.50e-15 & 1.05e-14 & 2.50e-14 & 3.28e-14 & 9.34e-11 & 9.52e-14 & 8.54e-03 & 1.84e-13 & 1.95e+09 & 3.05e-11 \\
         $80$ & 8.80e-15 & 5.11e-15 & 1.39e-14 & 4.74e-14 & 1.68e-11 & 3.90e-14 & 4.48e-03 & 8.97e-14 & 5.10e+08 & 4.95e-11 \\
         $160$ & 7.73e-15 & 7.13e-15 & 1.43e-14 & 3.99e-14 & 2.90e-11 & 7.03e-14 & 1.88e-03 & 1.24e-13 & 2.34e+09 & 2.25e-11 \\
         
      \bottomrule
      \end{tabular}
    }
 }
  \end{center}
  \caption{Example for Section \ref{subsec:gmulti}: errors $f_N$ by procedure in \ref{subsec:PCL} with $N_s = N$ for all $s$ (subcolumns on the left) and by \PCL{}, i.e. with a adaptive procedure (subcolumns on the right) when $M = 20, 40, 80, 160$.}\label{tab:gmulti}
\end{table}

Using the \PCL{} algorithm, for $M = 20, 40, 80, 160$, we generate the first 100 recursion coefficients. Table \ref{tab:gmulti} shows that the coefficients are more accurate when an adaptive procedure is applied to determine $N_s$, no matter what $M$ is. The results here are produced by routine \texttt{ex\_gmulti\_component.py} in \cite{Univariate_ttr_examples}.

%\newpage
%
%\subsection{Conditioning}
%The map \eqref{eq:Delta-coeffs} effects the transformation
%\begin{align*}
%(F,G) &\mapsto (\Delta a_n, \Delta b_n), & \left( \begin{array}{c} \Delta a_n \\ \Delta b_n \end{array}\right) &= \left( \begin{array}{c} F b_n \\ \sqrt{G - F^2} \end{array}\right)
%\end{align*}
%The Jacobian of this map at the values 
%\begin{align*}
%  F &= G_{n,n+1} = -\frac{\Delta a_n}{b_n}, & G &= G_{n+1,n+1} = \left( \Delta b_{n+1}\right)^2 + \frac{\Delta a_n^2}{b_n^2},
%\end{align*}
%is
%\begin{align*}
%J &= \left(\begin{array}{cc} b_n & 0 \\ -\frac{\Delta a_n}{b_{n+1}} & \frac{b_n}{2 b_{n+1}} \end{array}\right)
%\end{align*}
%The norm of this matrix is
%\begin{align*}
%  \left\| J \right\|_2 &= \frac{\|J\|_F}{\sqrt{2}} \sqrt{1 + \sqrt{1 - \frac{(2 \det J)^2}{\|J\|_F^4} }}
%\end{align*}

%%%
\section{Summary and extensions}\label{summary}

In this paper, we summarize several existing numerical methods for computing these recurrence coefficients associated to measures for which explicit formulas are not available. We propose a novel ``predictor-corrector" algorithm and study the accuracy and efficiency by comparing with existing methods for fairly general measures. The method makes predictions for the next coefficients and correct them iteratively. Finally, we introduce a hybrid algorithm that combines the ``predictor-corrector" algorithm and the (stabilized) Lanczos procedure. It can be used to compute recurrence coefficients for a general measure with multiple continuous and discrete components.

The predictor-corrector algorithm outperforms many other methods and is competitive with the Stieltjes procedure when a continuous measure is given. For a discrete measure, it can compute accurate coefficients only when the discrete support $M$ is large enough. However, the (stabilized) Lanczos procedure requires empirically appears to be superior for discrete measures. Based on this observation, we propose a ``predictor-corrector-Lanczos" algorithm is that is a hybrid of the predictor-corrector and Lanczos schemes, and applies to a fairly general class of measures.

We focus on the computation of recurrence coefficients for univariate orthogonal polynomial families. Thus, a natural extension of this work would be to adapt the approaches to address the same problem for multivariate polynomials, for which the formulations can be substantially more complex. Such investigations are the focus of ongoing work. %We intend to address this in a follow-up study. Finally, the three-term recurrence relation would be expressed in a matrix form instead of scalars. It would be interesting to see how to apply our novel algorithms on this problem. We intend to address this in a follow-up study as well.

\bibliographystyle{amsplain}
\bibliography{references.bib}

\end{document}